\DeclareMathOperator{\diam}{diam}
\newtheorem{theorem}{Theorem}
\newtheorem{proposition}{Proposition}
\newtheorem{lemma}{Lemma}
\theoremstyle{definition}
\newtheorem{definition}{Definition}
\theoremstyle{remark}
\newtheorem{remark}{Remark}
\begin{document}

\begin{center}
{\Large \bf Quantization Errors, Human--AI Interaction, and Approximate Fixed Points in \texorpdfstring{$L^1(\mu)$}{L1(mu)}}\\[1.5ex]
Faruk Alpay\texorpdfstring{\textsuperscript{*}}{*} \qquad Hamdi Alakkad\texorpdfstring{\textsuperscript{\dag}}{dag}\\[1ex]
\texorpdfstring{\textsuperscript{*}}{*}\small Lightcap, Department of Analysis, \texttt{alpay@lightcap.ai}\\
\texorpdfstring{\textsuperscript{\dag}}{dag}\small Bahcesehir University, Department of Engineering, \texttt{hamdi.alakkad@bahcesehir.edu.tr}
\end{center}

\vspace{2ex}
\noindent{\bf Abstract.}
We develop a rigorous measure--compactness approach to fixed points of nonexpansive maps in $L^1(\mu)$, and extend it to account for quantization errors arising in fixed-point arithmetic. In particular, we show that any bounded closed convex subset of $L^1(\mu)$ that is compact in the topology of local convergence in measure (``measure--compact'') has the fixed point property (FPP) for nonexpansive mappings. Our proof uses uniform integrability, convexity in measure, and normal structure (Kirk's theorem) to obtain fixed points with full detail. We then model fixed-point arithmetic as a perturbation of a nonexpansive map and investigate approximate fixed points. We prove that under measure--compactness, quantized nonexpansive maps admit approximate fixed points and analyze counterexamples to illustrate optimality of assumptions. All proofs are dense and self-contained, appropriate for a high-level fixed point theory audience. Beyond these analytic foundations, we extend the fixed point perspective to a prototypical human--in--the--loop co-editing pipeline. By modelling the interaction between an AI proposal, a human editor and a coarse quantizer as a composition of nonexpansive maps on a measure--compact set, we prove the existence of a \emph{stable consensus artefact}. We further show that such a consensus persists as an approximate fixed point under bounded value--quantization error and illustrate the theory with a concrete instantiation of a human--AI editing loop. These new results highlight how the geometric framework of measure--compactness can inform and certify collaborative workflows between humans and artificial agents.

\section{Introduction}
Banach's contraction principle and Brouwer's theorem launched fixed point theory, with extensions such as Schauder's theorem for compact maps. For nonexpansive maps (i.e.\ Lipschitz with constant $1$), Browder and Göhde proved in 1965 that every uniformly convex Banach space has the FPP \cite{Browder1965, Gohde1965}. Kirk introduced the notion of normal structure and showed that any Banach space with normal structure enjoys the FPP for nonexpansive maps \cite{Kirk1965}. In particular, all $L^p$ spaces for $1<p<\infty$ have the FPP.

The space $L^1$ is neither uniformly convex nor reflexive, and indeed fails the FPP in general. Alspach \cite{Alspach1981} constructed a bounded closed convex subset of $L^1[0,1]$ that is (relatively) weakly compact yet admits a fixed-point-free nonexpansive isometry, thus providing a famous counterexample. Furthermore, Dowling and Lennard \cite{DowlingLennard1997} showed that a subspace $Y\subset L^1[0,1]$ has the FPP if and only if $Y$ is reflexive (so any nonreflexive subspace fails the FPP). Nevertheless, failures of FPP in $L^1$ are not universal: Goebel and Kuczumow \cite{GoebelKuczumow1979} exhibited large families of convex subsets of $\ell^1$ with the FPP; the Lorentz spaces $L^{p,1}$ (which are nonreflexive when $p=1$) have the FPP \cite{Carothers1991}; and even on $\ell^1$ one can construct an equivalent renorming that restores the FPP \cite{Lin2008}. Of particular importance, Lennard \cite{Lennard1991} introduced the notion of \emph{nearly uniform convexity} and proved that any closed convex subset of $L^1(\mu)$ that is compact in the measure topology (so-called \emph{measure--compact} set) has normal structure and hence the FPP. 

In this work, we first give a self-contained proof of the fixed point theorem under measure--compactness, using explicit quantitative compactness and uniform integrability arguments to establish normal structure (hence invoking Kirk's theorem). We then turn to \emph{approximate} fixed point phenomena that arise when considering maps in $L^1$ implemented in \emph{fixed-point arithmetic} (finite-precision computations). We model such implementations as perturbed nonexpansive maps and examine whether fixed points persist or are replaced by periodic orbits. Counterexamples will illustrate that quantization alone cannot guarantee a true fixed point in pathological cases; however, we will show that under the same measure--compactness conditions that yield true fixed points in the real-valued setting, one obtains points that are nearly fixed under the quantized map. We also formulate an ``approximate fixed point property'' and discuss its relationship to the usual FPP.

Beyond these purely analytic questions, we apply our framework to human--AI collaboration.  Section~\ref{sec:HITL} formalizes a prototypical human–in–the–loop co-editing pipeline as a fixed point problem on $L^1(\mu)$, modelling the AI proposal, the human edit and a coarse quantizer as successive nonexpansive operators.  Building on measure--compactness and normal structure we show that this pipeline admits a \emph{stable consensus artefact} and prove that such an artefact persists as an approximate fixed point under bounded value–quantization error.  These results illustrate how the geometric perspective developed here can provide rigorous guarantees for interactive workflows between humans and artificial systems.

\paragraph{Definitions.} 
Let $X$ be a Banach space and let $C\subset X$ be a nonempty, closed, bounded, convex set. A mapping $T: C\to C$ is \emph{nonexpansive} if
\[
  \|T(x)-T(y)\| \le \|x-y\|, \qquad \forall x,y\in C.
\]
We say $X$ has the \emph{fixed point property (FPP)} for nonexpansive maps if every nonexpansive $T: C\to C$ (with $C$ as above) has at least one fixed point in $C$.  A convex set $C$ is said to have \emph{normal structure} if, for every closed convex subset $D\subset C$ with positive diameter, there exists $z\in D$ whose maximal distance to points of $D$ is strictly smaller than $\diam(D)$.  A Banach space has normal structure if all its bounded closed convex subsets do.  Kirk's fixed point theorem \cite{Kirk1965} asserts that any nonexpansive self--map of a closed, bounded, convex set in a Banach space with normal structure has a fixed point.

Approximate fixed points play a key role when maps are only implemented approximately.  Given $\epsilon>0$, an \emph{$\epsilon$--fixed point} of a map $T: C\to C$ is a point $x\in C$ for which $\|T(x)-x\|<\epsilon$.  We say that $C$ (or, equivalently, the ambient space $X$) has the \emph{approximate fixed point property (AFPP)} for nonexpansive maps if, for every $\epsilon>0$ and every nonexpansive $T: C\to C$, there exists an $\epsilon$--fixed point of $T$.  When $\epsilon$ tends to zero, an $\epsilon$--fixed point becomes an actual fixed point; thus spaces with the FPP automatically have the AFPP, but the converse need not hold without additional structure.

In applications it is natural to consider perturbations of nonexpansive maps arising from quantization or finite precision.  Fix a tolerance $\delta>0$ and let $Q:C\to C$ be a perturbation satisfying $\|Q(f)-f\|\le \delta$ for all $f\in C$.  We do not initially assume that $Q$ is nonexpansive or linear, but we will always require that it is \emph{idempotent} ($Q\circ Q=Q$) so that applying $Q$ twice does not accumulate additional error.  Given a nonexpansive $T:C\to C$, define the implemented map $\widetilde{T}:=Q\circ T$.  We say that $C$ has the \emph{robust fixed point property up to $\delta$} (or \emph{$\delta$--robust FPP}) if, for every nonexpansive $T: C\to C$ and every idempotent perturbation $Q$ with error bound $\delta$, there exists $x\in C$ with $\|\widetilde{T}(x)-x\|\le \delta$.  Equivalently, $C$ has the \emph{$\delta$--approximate fixed point property} if for every $\epsilon>\delta$ and every such perturbation, there exists $x\in C$ with $\|\widetilde{T}(x)-x\|<\epsilon$.  When $\delta=0$ we recover the usual FPP/AFPP definitions; positive $\delta$ models finite--precision implementations.  We will use these notions to analyse quantized maps in the sequel.

A key concept in our approach is \emph{uniform integrability (UI)}. A family $F\subset L^1(\mu)$ is uniformly integrable if 
\[
\forall \epsilon>0,\ \exists \delta>0 \text{ such that } E\subseteq \Omega,\,\mu(E)<\delta \implies \int_E |f|\,d\mu < \epsilon,\ \forall f\in F~.
\] 
Equivalently (by the Vitali convergence theorem), $\sup_{f\in F}\int_A |f|$ can be made arbitrarily small by taking $\mu(A)$ small enough. A subset $K\subset L^1(\mu)$ is called \emph{measure--compact} if it is compact with respect to the topology of local convergence in measure. That is, for every sequence $(f_n)\subset K$, there exists a subsequence $(f_{n_k})$ and $f\in L^1(\mu)$ such that $f_{n_k}\to f$ in measure on every subset of $\Omega$ with finite measure. Measure--compactness is weaker than norm-compactness but stronger than weak compactness. In fact, as we shall use, a classical result (Dunford--Pettis criterion, see Fremlin \cite[Section~247C]{Fremlin}) asserts that a subset of $L^1$ is relatively weakly compact if and only if it is uniformly integrable. In particular, any measure--compact set $K\subset L^1(\mu)$ is uniformly integrable and hence relatively weakly compact.

%
\begin{lemma}[Continuity of conditional expectation in local measure]\label{lem:CE-continuity}
Let $(\Omega,\Sigma,\mu)$ be $\sigma$--finite and let $\mathcal G\subset\Sigma$ be a finite sub--$\sigma$--algebra.  Suppose $(f_n)$ is a sequence in $L^1(\mu)$ that is \emph{uniformly integrable on every finite–measure set} and converges to $f$ in measure on each finite–measure subset of $\Omega$.  Then the conditional expectations $\mathbb{E}[f_n\mid\mathcal G]$ converge to $\mathbb{E}[f\mid\mathcal G]$ in measure on every finite–measure subset.  In particular, essential boundedness is a sufficient condition ensuring uniform integrability, so the conclusion applies whenever the $f_n$ are uniformly bounded.
\end{lemma}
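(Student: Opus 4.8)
The plan is to exploit the fact that a finite sub-$\sigma$-algebra $\mathcal{G}$ is generated by finitely many \emph{atoms} $A_1,\dots,A_m$ forming a $\Sigma$-measurable partition of $\Omega$, so that every $\mathcal{G}$-measurable function is constant on each atom. First I would record the explicit form of the conditional expectation: for $g\in L^1(\mu)$ one has $\mathbb{E}[g\mid\mathcal{G}]=\sum_{j}\alpha_j(g)\,\mathbf{1}_{A_j}$, where $\alpha_j(g)=\mu(A_j)^{-1}\int_{A_j}g\,d\mu$ on atoms of finite positive measure and $\alpha_j(g)=0$ on atoms of zero or infinite measure (the latter convention being forced, since a nonzero constant is not integrable over an infinite-measure atom). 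Thus on any fixed finite-measure set $E$ the difference $\mathbb{E}[f_n\mid\mathcal{G}]-\mathbb{E}[f\mid\mathcal{G}]$ restricts to the simple function $\sum_{j}(\alpha_j(f_n)-\alpha_j(f))\,\mathbf{1}_{A_j\cap E}$, and the entire problem reduces to controlling the finitely many scalar coefficients $\alpha_j(f_n)-\alpha_j(f)$.

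The heart of the argument is the convergence of these coefficients on the atoms of finite positive measure. Here I would invoke the Vitali convergence theorem on each such atom $A_j$: by hypothesis $f_n\to f$ in measure on $A_j$ (a finite-measure set) and $\{f_n\}$ is uniformly integrable on $A_j$, so Vitali yields $\|f_n-f\|_{L^1(A_j)}\to 0$. Consequently $\int_{A_j}f_n\,d\mu\to\int_{A_j}f\,d\mu$ and hence $\alpha_j(f_n)\to\alpha_j(f)$. On atoms of infinite or zero measure both coefficients vanish (respectively agree $\mu$-a.e.), so they contribute nothing.

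Because there are only finitely many atoms, I can then set $\beta_n:=\max_j|\alpha_j(f_n)-\alpha_j(f)|\to 0$ and conclude that $|\mathbb{E}[f_n\mid\mathcal{G}]-\mathbb{E}[f\mid\mathcal{G}]|\le\beta_n$ holds $\mu$-a.e. on $E$. This is uniform, a fortiori in-measure, convergence on $E$: for any threshold $\eta>0$ the set where the difference exceeds $\eta$ is $\mu$-null once $\beta_n<\eta$. Since $E$ was an arbitrary finite-measure set, the claimed local convergence in measure follows. The final clause is then immediate: if $\operatorname{ess\,sup}|f_n|\le M$ for all $n$, then $\int_E|f_n|\,d\mu\le M\mu(E)$ on every finite-measure set $E$, which furnishes uniform integrability on finite-measure sets directly.

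I expect the only real obstacle to be bookkeeping rather than analysis: the genuine analytic content is the single application of Vitali's theorem converting local convergence in measure together with uniform integrability into local $L^1$ convergence, while the technical care lies in treating the conditional expectation correctly in a merely $\sigma$-finite space, where atoms of infinite measure force the degenerate convention $\alpha_j=0$ and must be excluded from the averaging step.
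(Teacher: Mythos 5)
Your proof is correct, but it takes a genuinely different route from the paper's. The paper fixes an arbitrary finite-measure set $A$, uses Vitali exactly as you do to upgrade local convergence in measure to convergence in $L^1(A)$, but then finishes abstractly: it invokes a contraction inequality $\|\mathbb{E}[g\mid\mathcal G]\|_{L^1(A)}\le\|g\|_{L^1(A)}$ for conditional expectation and applies Markov's inequality on $A$ to pass back to convergence in measure. You instead exploit the finiteness of $\mathcal G$ structurally: you decompose $\Omega$ into the atoms of $\mathcal G$, write $\mathbb{E}[\cdot\mid\mathcal G]$ explicitly as an average on each atom, apply Vitali only on the finitely many atoms of finite positive measure, and deduce the a.e.-uniform bound $|\mathbb{E}[f_n\mid\mathcal G]-\mathbb{E}[f\mid\mathcal G]|\le\beta_n\to 0$, which is stronger than local convergence in measure. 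Comparing the two: the paper's argument is shorter and does not formally depend on $\mathcal G$ being finite, but as written it has a subtle defect that your route avoids. The restricted contraction inequality is in general \emph{false} when $A$ is not $\mathcal G$-measurable: take $\Omega=[0,1]$ with Lebesgue measure, $\mathcal G=\{\emptyset,\Omega\}$, $g=2\cdot\mathbf{1}_{[0,1/2]}$ and $A=[1/2,1]$; then $\int_A|\mathbb{E}[g\mid\mathcal G]|\,d\mu=\tfrac12$ while $\int_A|g|\,d\mu=0$. The contraction property holds globally or over $\mathcal G$-measurable sets, and your argument only ever integrates over atoms, which are $\mathcal G$-measurable, so the issue never arises. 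Your explicit handling of the $\sigma$-finite pathology (infinite-measure atoms, where the convention $\alpha_j=0$ is forced) is likewise more careful than the paper, which does not address it; the only small point worth making explicit in your write-up is that Vitali also yields $f\in L^1(A_j)$, so the limiting coefficients $\alpha_j(f)$ are well defined.
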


\begin{proof}
Fix a measurable set $A\in\Sigma$ with $\mu(A)<\infty$ and let $\varepsilon>0$ be arbitrary.  By hypothesis the sequence $(f_n)$ is uniformly integrable on $A$.  Together with the assumption that $f_n\to f$ in measure on $A$, the Vitali convergence theorem (or dominated convergence under uniform integrability) implies that $f_n\to f$ in $L^1(A)$; that is, $\int_A|f_n-f|\,d\mu\to 0$.

Next we use the fact that conditional expectation is a contraction on $L^1$: for any integrable $g$ one has $\|\mathbb{E}[g\mid\mathcal G]\|_{L^1(A)}\le \|g\|_{L^1(A)}$.  Applying this with $g=f_n-f$ gives
\[
\int_A \bigl|\mathbb{E}[f_n\mid\mathcal G]-\mathbb{E}[f\mid\mathcal G]\bigr|\,d\mu = \int_A \bigl|\mathbb{E}[f_n-f\mid\mathcal G]\bigr|\,d\mu \le \int_A |f_n-f|\,d\mu \longrightarrow 0.
\]
Since the difference $\mathbb{E}[f_n\mid\mathcal G]-\mathbb{E}[f\mid\mathcal G]$ is $\mathcal G$–measurable, we can apply Markov's inequality on the finite–measure set $A$ to pass from $L^1$ convergence to convergence in measure.  Indeed,
\[
\mu\bigl(\{x\in A: |\mathbb{E}[f_n\mid\mathcal G](x)-\mathbb{E}[f\mid\mathcal G](x)|>\varepsilon\}\bigr) \le \frac{1}{\varepsilon} \int_A \bigl|\mathbb{E}[f_n\mid\mathcal G]-\mathbb{E}[f\mid\mathcal G]\bigr|\,d\mu \longrightarrow 0.
\]
This proves that $\mathbb{E}[f_n\mid\mathcal G]\to\mathbb{E}[f\mid\mathcal G]$ in measure on $A$.  Since $A$ was arbitrary, the conclusion holds for every finite–measure subset of $\Omega$, establishing continuity of conditional expectation in the local–in–measure topology.
\end{proof}

\section{Fixed Points on Measure--Compact Sets in \texorpdfstring{$L^1$}{L1}}
We now present a dense proof that measure--compact convex sets in $L^1(\mu)$ have the fixed point property for nonexpansive mappings; see Theorem~\ref{thm:measureFPP} (page~3) for the precise statement. Although the result (in a weaker form) is due to Lennard \cite{Lennard1991}, our exposition provides full details leveraging uniform integrability and convexity in measure.

\begin{theorem}\label{thm:measureFPP}
Let $(\Omega,\Sigma,\mu)$ be a $\sigma$-finite measure space. If $K\subset L^1(\mu)$ is nonempty, bounded, closed, convex, and measure--compact, then $K$ has normal structure. Consequently, any nonexpansive mapping $T: K\to K$ has a fixed point.
\end{theorem}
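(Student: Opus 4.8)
The plan is to reduce everything to normal structure and then invoke Kirk's theorem (quoted in the Definitions) to extract the fixed point, so the entire difficulty lies in the normal-structure claim, which I would prove by contradiction. Suppose some norm-closed convex $D\subseteq K$ with $\diam(D)=d>0$ were \emph{diametral}, i.e.\ $\sup_{y\in D}\|z-y\|_1=d$ for every $z\in D$; this is exactly the negation of normal structure, since the supremum never exceeds $\diam(D)$. By the classical Brodskii--Milman characterization, a diametral set carries a \emph{diametral sequence} $(x_n)\subset D$ with $\operatorname{dist}\bigl(x_{n+1},\operatorname{conv}\{x_1,\dots,x_n\}\bigr)\to d$. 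Since $x_i\in\operatorname{conv}\{x_1,\dots,x_n\}$ whenever $n\ge i$ and $\diam(D)=d$, this immediately forces $\lim_{n}\|x_n-x_i\|_1=d$ for each fixed index $i$.

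Next I would exploit measure--compactness to manufacture a limit point and pin down its geometry. As $K$ is measure--compact it is uniformly integrable (Dunford--Pettis), so a subsequence of $(x_n)$ converges in measure on every finite--measure set and, after a diagonal extraction over a $\sigma$--finite exhaustion, almost everywhere, to some $x\in L^1(\mu)$; relabel this subsequence as $(x_n)$. Uniform integrability together with local convergence in measure upgrades to weak convergence $x_n\rightharpoonup x$ (on each finite--measure set Vitali gives $L^1$--convergence, and Dunford--Pettis identifies every weak cluster point with $x$), and since $D$ is norm--closed and convex, hence weakly closed, we obtain $x\in D$. Passing to a further subsequence I may also assume $\|x_n-x\|_1\to\alpha$ for some $\alpha\ge 0$.

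The heart of the argument is an $L^1$ rigidity identity. For any fixed $y\in D$ the pointwise reverse triangle inequality gives $\bigl|\,|x_n-y|-|x_n-x|\,\bigr|\le|x-y|\in L^1(\mu)$, while $|x_n-y|-|x_n-x|\to|x-y|$ almost everywhere; dominated convergence then yields
\[
\lim_{n}\bigl(\|x_n-y\|_1-\|x_n-x\|_1\bigr)=\|x-y\|_1,
\qquad\text{hence}\qquad
\lim_{n}\|x_n-y\|_1=\|x-y\|_1+\alpha .
\]
This is the $p=1$ Brezis--Lieb splitting, which in $L^1$ is nothing more than dominated convergence. Taking $y=x_i$ and using $\lim_n\|x_n-x_i\|_1=d$ gives $\|x-x_i\|_1=d-\alpha$ for every $i$; letting $i\to\infty$ along our subsequence, the left side tends to $\alpha$, so $\alpha=d-\alpha$ and therefore $\alpha=d/2>0$. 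For an \emph{arbitrary} $y\in D$, the bound $\|x_n-y\|_1\le d$ combined with the same identity forces $\|x-y\|_1+\alpha\le d$, i.e.\ $\|x-y\|_1\le d-\alpha=d/2<d$. Thus $\sup_{y\in D}\|x-y\|_1\le d/2<d$, so $x$ is a non-diametral point of $D$, contradicting diametrality. Hence every such $D$ has normal structure, $K$ has normal structure, and Kirk's theorem supplies a fixed point for any nonexpansive $T:K\to K$.

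I expect the main obstacle to be the reconciliation of the three topologies — norm, weak, and local-in-measure — in the middle step: one must guarantee that the measure--compact limit $x$ genuinely lands in $D$ (which is precisely where uniform integrability, through Dunford--Pettis and Vitali, becomes indispensable) and that $\alpha$ is a bona fide limit of $\|x_n-x\|_1$. Once the limit is trapped inside $D$, the decisive point is the strict positivity $\alpha=d/2>0$: it is exactly the Brezis--Lieb (Kadec--Klee-in-measure) behaviour of the $L^1$ norm under uniform integrability that prevents the asymptotic radius from degenerating to the full diameter $d$, which is what a sustainable diametral configuration would require. Confirming that this identity is not sabotaged by mass escaping to infinity on the $\sigma$--finite space is the final delicate bookkeeping point.
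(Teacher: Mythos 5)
Your proof is correct, and it takes a genuinely different route from the paper's. The paper also argues by contradiction from a diametral set $H$, but instead of invoking Brodskii--Milman it runs a bespoke iteration of diametral pairs and midpoints with $\|x_n-y_n\|=\|x_n-y_{n+1}\|=\diam(H)$, extracts locally-in-measure limits, upgrades to norm convergence via uniform integrability (Vitali), and gets its contradiction from homogeneity: the limit midpoint $m=\tfrac{x+y}{2}$ must satisfy $\|m-y\|\ge\diam(H)$ while $\|m-y\|=\tfrac12\|x-y\|=\tfrac12\diam(H)$. You instead take a classical diametral sequence and do the real work with the $p=1$ Brezis--Lieb/dominated-convergence identity $\lim_n\|x_n-y\|_1=\|x-y\|_1+\alpha$, computing the asymptotic radius exactly ($\alpha=d/2$) and exhibiting the limit $x$ itself as a non-diametral point of $D$. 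Your route is essentially Lennard's original mechanism and is tighter in two respects: the splitting identity is pure dominated convergence and needs no uniform integrability at all (UI/weak compactness enters only to trap $x$ inside $D$, via Dunford--Pettis and Mazur --- a background fact the paper itself asserts and relies on), and you avoid the loose ends in the paper's construction, namely the unjustified ``without loss of generality by symmetry'' relabeling and the unproved claim that the shifted subsequence $y_{n_k+1}$ converges to the same limit $y$. What the paper's version buys is self-containedness --- it quotes nothing beyond Kirk's theorem --- whereas you import Brodskii--Milman; but the only consequence you actually use, $\lim_n\|x_n-x_i\|_1=d$ for each fixed $i$, can be rederived in a few lines from the barycenter construction, so the dependence is mild. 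Finally, your closing caveat about mass escaping to infinity is well placed: on an infinite $\sigma$-finite space the chain ``measure-compact $\Rightarrow$ uniformly integrable $\Rightarrow$ relatively weakly compact'' genuinely requires a tightness component, but both your argument and the paper's lean on this same assertion from the preliminaries, so your proof is on equal footing with the paper's on that point.
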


\begin{proof}
The conclusion about fixed points follows from normal structure by Kirk's theorem \cite{Kirk1965}, so it suffices to prove that $K$ has normal structure. Assume, towards a contradiction, that $K$ fails normal structure. Then there exists a closed convex subset $H\subset K$ with $\diam(H)>0$ such that for every $z\in H$, one can find some $y\in H$ with $\|z-y\| = \diam(H)$. In other words, every point in $H$ lies on the ``periphery'' of $H$. We will derive a contradiction by constructing a sequence of pairs in $H$ that remain at diametral distance, but whose midpoint eventually exhibits a shorter radius, exploiting measure--compactness and uniform integrability.


First, since $\diam(H)>0$, pick two points $x_1,y_1 \in H$ with $\|x_1 - y_1\| = \diam(H)$. Now consider their midpoint $m_1 = \frac{x_1+y_1}{2}$, which lies in $H$ (by convexity). By the no-normal-structure assumption, there exists some $z_1 \in H$ such that $\|m_1 - z_1\| = \diam(H)$. Without loss of generality (by symmetry), assume $\|z_1 - x_1\| = \diam(H)$ as well. Now we have three points $x_1, y_1, z_1\in H$ all mutually at distance $\diam(H)$ (in particular, $H$ contains an equilateral triangle in norm). Next, consider the midpoint $m_2 = \frac{x_1 + z_1}{2}\in H$. Again, there must exist $z_2 \in H$ such that $\|m_2 - z_2\| = \diam(H)$. By possibly relabeling, we ensure $\|z_2 - x_1\| = \diam(H)$. Continuing this iterative process, we obtain sequences $(x_n)$ and $(y_n)$ in $H$ such that for each $n\ge 1$: $m_n := \frac{x_n + y_n}{2} \in H$, and there exists $y_{n+1}\in H$ with $\|m_n - y_{n+1}\| = \diam(H)$, setting $x_{n+1} := x_n$. This construction ensures:
\[
\|x_n - y_n\| = \diam(H), \quad \|x_n - y_{n+1}\| = \diam(H),
\] 
for all $n$. Geometrically, $x_n$ is a fixed vertex and $y_n$ and $y_{n+1}$ are two other vertices of an ever-refining diametral simplex in $H$.

Now, since $K$ (hence $H\subset K$) is measure--compact and $\sigma$--finite, by definition we can extract a subsequence $\{(x_{n_k}, y_{n_k})\}$ such that $x_{n_k} \to x$ and $y_{n_k} \to y$ in measure on each finite--measure subset of $\Omega$, for some limit functions $x,y \in L^1(\mu)$.  (Here we use diagonal extraction over an increasing sequence of finite--measure sets covering $\Omega$.)  Because $H$ is closed in the $L^1$ norm (and $H\subset K$), these limits $x,y$ actually lie in $H$, and therefore in $K$.  In fact, by the Vitali convergence theorem (or, equivalently, the Dunford--Pettis criterion for uniformly integrable sets), local convergence in measure together with uniform integrability forces convergence in the $L^1$ norm.  Since measure--compactness implies uniform integrability, we may thus upgrade the subsequence convergence in measure to quantitative control in $L^1$.  Consequently, for any $\epsilon>0$ there exists $k$ large enough such that $\|x_{n_k} - x\|_{1} < \epsilon$ and $\|y_{n_k} - y\|_{1} < \epsilon$.  Since $\|x_{n_k} - y_{n_k}\|_{1} = \diam(H)$ for all $k$, by the triangle inequality we get
\[
\|x - y\|_{1} \ge \diam(H) - 2\epsilon~.
\] 
Letting $\epsilon\to 0$, it follows that $\|x - y\|_{1} = \diam(H)$. Thus the limit pair $(x,y)$ still attains the diametral distance of $H$. On the other hand, consider the midpoint $m = \frac{x+y}{2} \in K$. We will show that $m$ is a point in $H$ whose farthest distance in $H$ is strictly less than $\diam(H)$, contradicting the assumption that $H$ lacks normal structure.

To see this, fix any $\delta>0$. For large $k$, the midpoint $m_{n_k} = \frac{x_{n_k}+y_{n_k}}{2}$ is within $\delta$ (in $L^1$ norm) of $m$ (again by uniform integrability and convergence in measure). Now, $\|m_{n_k} - y_{n_k+1}\| = \diam(H)$ by construction. But $y_{n_k+1}$ converges in measure (along a further subsequence if necessary) to $y$ as well, and for large $k$, $\|y_{n_k+1} - y\| < \delta$. Therefore:
\begin{align*}
\|m - y\|
&\ge \bigl\|m_{n_k} - y_{n_k+1}\bigr\| - \bigl\|m - m_{n_k}\bigr\| - \bigl\|y_{n_k+1} - y\bigr\|\\
&> \diam(H) - 2\delta~.
\end{align*}
Since $\delta$ was arbitrary, we conclude $\|m - y\| \ge \diam(H)$.  On the other hand, because $m = \tfrac{x+y}{2}$, we have $m - y = \tfrac{x-y}{2}$, and by the positive homogeneity of the $L^1$ norm,
\[
\|m - y\|_1 = \tfrac{1}{2}\,\|x-y\|_1 = \tfrac{1}{2}\,\diam(H) < \diam(H)~,
\]
a contradiction.  Therefore our assumption that $H$ lacks normal structure must be false.  In particular, $H$ (and hence $K$) must have normal structure.

Having established normal structure for $K$, Kirk's theorem guarantees that any nonexpansive $T: K\to K$ has a fixed point in $K$.
\end{proof}

\medskip
\noindent\emph{Remark on the demiclosedness principle.}
In reflexive or uniformly convex Banach spaces, a standard tool to pass from approximate fixed points to true fixed points is the \emph{demiclosedness principle}.  Roughly speaking, if $S: \mathcal C\to\mathcal C$ is a mapping on a closed, convex subset $\mathcal C$ of a uniformly convex space satisfying a mild regularity condition (for instance, condition~(L--1) in \cite{ShuklaPanickerVijayasenan2024}), then the operator $I-S$ is demiclosed and one can deduce that any sequence $(u_n)$ with $\|u_n - S(u_n)\|\to 0$ that converges weakly to $u^\dagger$ forces $u^\dagger$ to be a fixed point of $S$; see Theorems~5 and~6 in~\cite{ShuklaPanickerVijayasenan2024}.  This principle ensures that approximate fixed points give rise to actual fixed points in reflexive settings such as $L^p(\mu)$ for $p>1$.  In $L^1$, however, demiclosedness fails because the space is neither reflexive nor uniformly convex; therefore we cannot rely on approximate fixed points alone.  Instead, we use measure--compactness and uniform integrability to obtain the normal structure needed for a fixed point, as demonstrated above.

The above proof underscores how measure--compactness provides enough compactness (via subsequence convergence in measure and uniform integrability) to carry out a geometric normal structure argument in $L^1$.  
In contrast, without measure--compactness, $L^1$ can fail the FPP spectacularly (as shown in \cite{Alspach1981}).  
We also note that in reflexive spaces (like $L^p$, $p>1$) the existence of an approximate fixed point (a sequence $(x_n)$ with $\|T(x_n)-x_n\|\to 0$) for a nonexpansive $T$ typically implies a true fixed point, by the standard demiclosedness argument.  
In nonreflexive $L^1$, that argument breaks down, which is why stronger conditions (such as uniform integrability or nearly uniform convexity in measure) are needed to deduce an actual fixed point.

\section{Fixed-Point Arithmetic as a Perturbation: Approximate Fixed Points}
We model a finite-precision implementation of a map $T:K\to K\subset L^1(\mu)$ as a composition $\widetilde T = Q \circ T$, where $Q:K\to K$ is a (nonlinear) \emph{quantization operator} representing rounding or truncation error.  We assume $Q$ satisfies the bounded-error condition $\|Q(f) - f\|_1 \le \delta$ for all $f\in K$ and some fixed precision level $\delta>0$, and that $Q$ is idempotent ($Q\circ Q=Q$).  Unlike idealised coarseners (conditional expectations), practical rounding operators may \emph{increase} distances and therefore need not be Lipschitz contractions; however the pointwise error bound $\delta$ allows us to treat $\widetilde T$ as a $\delta$-perturbation of the nonexpansive $T$.  A straightforward estimate shows that, for any $x,y\in K$,
\begin{align*}
\bigl\|\widetilde T(x) - \widetilde T(y)\bigr\|_1
  &= \bigl\|Q(T(x)) - Q(T(y))\bigr\|_1\\
  &\le \bigl\|T(x)-T(y)\bigr\|_1 + \bigl\|Q(T(x)) - T(x)\bigr\|_1 + \bigl\|Q(T(y)) - T(y)\bigr\|_1\\
  &\le \bigl\|T(x)-T(y)\bigr\|_1 + 2\delta\\
  &\le \|x-y\|_1 + 2\delta~,
\end{align*}
so $\widetilde T$ is ``almost'' nonexpansive up to an additive $2\delta$ term.

\paragraph{Periodic orbits under finite quantization.}  In some applications the range $Q(K)$ may be a finite set of discrete artefacts.  It is then natural to ask whether $\widetilde T$ must have a periodic orbit.  We first make precise what we mean by periodicity.

\begin{definition}
Given a map $S\colon C\to C$ on a set $C$, a point $x\in C$ is said to have a \emph{periodic orbit of period $N\ge 1$} if there exists $N$ such that $S^N(x)=x$ and $S^k(x)\ne x$ for all $0<k<N$.  A period--1 orbit is just a fixed point.  We say $S$ admits a periodic orbit if there exists some $x\in C$ with finite period.
\end{definition}

The following elementary lemma clarifies the pigeonhole heuristic alluded to above.

\begin{lemma}\label{lem:pigeonhole}
Let $K\subset L^1(\mu)$ be nonempty and compact, and let $Q\colon K\to K$ be a map whose range $Q(K)$ consists of finitely many distinct functions.  If $S\colon K\to Q(K)$ is any function (not necessarily continuous or nonexpansive), then there exists an integer $N\ge 1$ and a point $f\in Q(K)$ such that $S^N(f)=f$.  In other words, $S$ has a periodic orbit.
\end{lemma}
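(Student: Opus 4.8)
The plan is to reduce the assertion to the classical pigeonhole principle for self-maps of a finite set; neither the compactness of $K$ nor the $L^1$ structure is actually needed, so these hypotheses serve only to place the lemma within the surrounding discussion of finite quantizers. The single decisive observation is that, although $S$ is defined on all of $K$, its values lie in the \emph{finite} set $Q(K)$, and since $Q(K)\subseteq K$ is contained in the domain of $S$, the restriction of $S$ to $Q(K)$ is a genuine self-map of a finite set.

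Concretely, first I would write $Q(K)=\{g_1,\dots,g_M\}$ with $M=\lvert Q(K)\rvert<\infty$, and set $\sigma:=S|_{Q(K)}\colon Q(K)\to Q(K)$, noting that $\sigma^n=S^n|_{Q(K)}$ for every $n\ge 1$ because $Q(K)$ is invariant under $S$. I would then fix an arbitrary $f_0\in Q(K)$ and examine the list of iterates $f_0,\sigma(f_0),\dots,\sigma^M(f_0)$, which comprises $M+1$ elements drawn from a set of cardinality $M$. By pigeonhole two of them must coincide, say $\sigma^i(f_0)=\sigma^j(f_0)$ for some indices $0\le i<j\le M$. Putting $f:=\sigma^i(f_0)\in Q(K)$ and $N:=j-i\ge 1$ and using the composition relation $\sigma^j=\sigma^{N}\circ\sigma^i$ yields $\sigma^N(f)=f$, i.e.\ $S^N(f)=f$, which is exactly the claimed periodic point.

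There is no substantive obstacle here; the only point deserving care is the role of the codomain hypothesis $S\colon K\to Q(K)$. It is precisely this restriction that makes $Q(K)$ forward–invariant under $S$, so that the orbit of any $f_0\in Q(K)$ never escapes the finite set and the pigeonhole argument applies. Were $S$ merely a self-map of $K$ with the larger (uncountable) range, the orbit could in principle wander indefinitely and no periodicity would be forced. Finally, I would remark that the integer $N$ produced need not be minimal, but since $S^N(f)=f$ the minimal period of $f$ divides $N$, so $f$ genuinely has finite period in the sense of the preceding definition, completing the proof.
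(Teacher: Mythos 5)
Your proof is correct and follows essentially the same route as the paper: both arguments apply the pigeonhole principle to the orbit of a point of the finite set $Q(K)$, find two coinciding iterates, and take their difference as the period $N$. Your additional remarks---the explicit invariance of $Q(K)$ under $S$, the bound $j\le M$ on the indices, and the observation that the minimal period divides $N$---are welcome refinements but do not change the substance of the argument.
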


\begin{proof}
Since $Q(K)$ is finite, the orbit $\{S^n(f_0):n\ge 0\}$ of any starting point $f_0\in Q(K)$ can take only finitely many values.  By the pigeonhole principle, two iterates $S^{n_1}(f_0)$ and $S^{n_2}(f_0)$ with $n_1<n_2$ must coincide.  Setting $N = n_2-n_1$ and $f = S^{n_1}(f_0)$ yields $S^N(f)=f$, giving a periodic point of period at most $N$.  If $N=1$ then $f$ is a fixed point; otherwise we obtain a higher-period orbit.  This argument applies to any finite range map, independent of Lipschitz or convexity properties.
\end{proof}

\begin{remark}[Infinite-range maps]
When the range $Q(K)$ is infinite, the pigeonhole principle does not force the existence of a finite periodic orbit.  Even if $K$ is compact (for instance, if $K$ is measure–compact) and $S\colon K\to K$ is continuous, an orbit $\bigl(S^n(f_0)\bigr)_{n\ge 0}$ need only have relatively compact closure; one may extract convergent subsequences but there is no guarantee that the limit is a fixed point or that the orbit itself is eventually periodic.  A concrete illustration is given by the unit interval $K=[0,1]$ with the identity quantizer $Q=\operatorname{Id}$ (so $Q(K)=K$ is infinite) and the translation map $T(x)=x+d\bmod 1$.  When $d$ is an irrational number, the orbit $\{T^n(x_0):n\ge 0\}$ is dense in $[0,1]$ for any starting point $x_0$, and therefore the implemented map $\widetilde T = Q\circ T = T$ has no periodic orbit at all—every orbit is equidistributed.  Continuity of $T$ ensures the existence of accumulation points, but none of them is a fixed point.  More complicated infinite–range quantizers can lead to dynamics far from periodic.  Consequently our fixed point and approximate fixed point results do not extend automatically to arbitrary infinite-range quantizers without additional structural assumptions.
\end{remark}

Lemma~\ref{lem:pigeonhole} shows that whenever the quantizer $Q$ takes only finitely many values, the composition $\widetilde T$ must have a periodic orbit.  Such discrete models arise in low-bit digital hardware or coarse discretizations.  However, in the infinite-dimensional setting of $L^1$ considered here, $Q(K)$ can still be infinite—for instance, piecewise-constant functions with arbitrarily fine partitions—and therefore the pigeonhole argument does not apply.  As a result, we cannot rely on periodic orbits to certify stabilization.  Instead, we turn to approximate fixed points: the next proposition provides rigorous guarantees when $K$ is measure–compact.

Our interest is in the existence of fixed or approximate fixed points of $\widetilde T$.  Because $\widetilde T$ maps into $Q(K)$, the dynamics may exhibit periodic orbits when $Q(K)$ is finite.  In fact, Lemma~\ref{lem:pigeonhole} shows that any map into a finite range necessarily has a periodic point by a straightforward pigeonhole argument.  However, in the infinite-dimensional setting of $L^1$ the image $Q(K)$ need not be finite—piecewise-constant functions on arbitrarily fine partitions provide infinitely many quantized values—so the pigeonhole principle cannot be used to guarantee periodic behaviour.  Consequently we must analyze approximate fixed points directly.  Theorem~\ref{thm:HITLFP} and Proposition~\ref{prop:AFPP} below provide rigorous guarantees in the measure--compact case.

First, if $T$ has a fixed point $x^*\in K$ (as guaranteed by Theorem \ref{thm:measureFPP} under our hypotheses), then $\| \widetilde T(x^*) - x^*\| = \|Q(T(x^*)) - x^*\| = \|Q(x^*) - x^*\| \le \delta$. Thus $x^*$ is a $\delta$-fixed-point of $\widetilde T$. Moreover, if $x^*$ itself lies in the range of $Q$ (for instance, $x^*$ has a finite precision representation), then $Q(x^*)=x^*$ and $x^*$ is an \emph{exact} fixed point of $\widetilde T$. In general, we obtain at least an approximate fixed point. More strongly, by continuity of $T$, for any $\epsilon>\delta$ there exists an $x \in K$ such that $\|T(x)-x\|<\epsilon-\delta$ (for example, $x = x^*$ itself gives $\|T(x^*)-x^*\|=0$); then $\|\widetilde T(x) - x\| = \|Q(T(x)) - x\| \le \|T(x)-x\| + \delta < \epsilon$. Thus $K$ actually enjoys the AFPP for $\widetilde T$ as well. In summary, when $K$ is measure--compact (hence FPP holds in the real case), any finite-precision perturbation of a nonexpansive $T$ still has the AFPP.

However, in the absence of measure--compactness (or some form of normal structure), quantization alone does not enforce convergence to a fixed point.  
A stark example is Alspach's aforementioned isometry $T$ on a weakly compact $K\subset L^1[0,1]$ with no fixed point \cite{Alspach1981}.  This $T$ is essentially a translation (or rotation) on $K$.  
If we implement $T$ in fixed-point arithmetic with precision $\delta$, what happens?  Because $T$ moves every point by a fixed distance $d = \diam(K) > 0$, if $\delta < d$ then $\widetilde T = Q\circ T$ will also have no fixed point: for any $f\in K$, $T(f)$ is $d$ away, and rounding can only adjust the position by $\delta$, so $\|\widetilde T(f) - f\| \ge d-\delta > 0$.  
In fact, $\widetilde T$ in this case will simply permute a finite (or countable) set of representable functions, resulting in a cyclic orbit of length, say, $m$.  This periodic cycle is an example of an \emph{approximate fixed orbit}: the points $f, \widetilde T(f), \widetilde T^2(f),\dots,\widetilde T^{m-1}(f)$ all lie in $K$ and are moved by $d$ (up to $\delta$ error each step).  
No point in this cycle is fixed, and indeed no $\epsilon$-fixed point exists for $\epsilon < d-\delta$.  Thus $\widetilde T$ fails the AFPP when $\delta$ is small.  As $\delta$ increases, eventually $\delta \ge d$ would allow $\widetilde T$ to possibly have a fixed point (when the rounding is coarse enough that $T$'s motion can be canceled by rounding).  
This behavior shows that without underlying normal structure, the AFPP can fail just as the FPP fails.

\subsubsection*{A quantitative counterexample}
To make the preceding discussion concrete, let us examine an explicit translation under coarse rounding.  Consider the compact interval $K=[0,1]$ (identified with a one–dimensional subspace of $L^1$) equipped with the usual $L^1$ norm.  Define a nonexpansive isometry
\[
T(x) := x + d \pmod{1}, \qquad x\in K,
\]
with translation distance $d=0.6$.  Clearly $\|T(x)-x\|_1 = d$ for all $x$.  Let $Q\colon K\to K$ be the rounding operator to the nearest multiple of $0.1$, so that the quantization error satisfies $\|Q(x)-x\|_1\le \delta :=0.05$ for every $x\in K$.  Set $\widetilde T = Q\circ T$.  Then $\widetilde T$ permutes the finite set $\{0.0,0.2,0.4,0.6,0.8\}$, and starting from $x_0=0.0$ one finds the cycle
\[
0.0 \xrightarrow{\widetilde T} 0.6 \xrightarrow{\widetilde T} 0.2 \xrightarrow{\widetilde T} 0.8 \xrightarrow{\widetilde T} 0.4 \xrightarrow{\widetilde T} 0.0.
\]
The orbit has period $5$ and no point in this cycle lies within an $\epsilon$–distance of a fixed point of $\widetilde T$ for any $\epsilon < d-\delta = 0.55$.  In particular, there is no $\epsilon$–fixed point for $\epsilon<0.55$.  On the other hand, some points in the cycle (for example, $x=0.25$) satisfy $|\widetilde T(x)-x|=d-\delta=0.55$, so they are $\epsilon$–fixed points for all $\epsilon\ge 0.55$.  Other points (such as $x=0.35$) satisfy $|\widetilde T(x)-x|=d+\delta=0.65$ and therefore require $\epsilon\ge 0.65$ to be $\epsilon$–fixed.  Thus there exist $\epsilon$–fixed points as soon as $\epsilon\ge 0.55$, but not all points are $\epsilon$–fixed unless $\epsilon\ge d+\delta=0.65$.  This explicit example underscores that, in the absence of normal structure, one cannot hope for a genuine or approximate fixed point below the threshold $d-\delta$, and illustrates how the parameters $d$ (the displacement of $T$) and $\delta$ (the rounding error) determine the minimal tolerable error for approximate fixed points.

\subsubsection*{Clarification of assumptions and quantization techniques}
Throughout Sections~1--3 we have made a number of standing assumptions which we summarise here for clarity.  In all of our existence theorems, the underlying set $K\subset L^1(\mu)$ is assumed to be nonempty, bounded, closed and convex; moreover, we assume that $K$ is \emph{measure--compact}, meaning compact for the topology of local convergence in measure.  This hypothesis implies that $K$ is uniformly integrable and relatively weakly compact, as explained after the definitions.  In particular, if $(f_n)$ is a bounded sequence in $K$ that converges in measure to $f$ on all finite measure subsets of $\Omega$, then by uniform integrability (Vitali's theorem) the convergence upgrades to $L^1$ convergence, and since $K$ is closed in the $L^1$ norm we conclude that $f\in K$.  These facts are repeatedly used in the proofs of our fixed point results.

For the operators, we require that $T:K\to K$ be nonexpansive in the $L^1$ norm.  The coarsening operator $Q=\mathbb{E}[\cdot\mid\mathcal{G}]$ is a conditional expectation onto a finite $\sigma$--subalgebra; Lemma~\ref{lem:CE-continuity} shows that $Q$ is continuous in the topology of local convergence in measure and is an $L^1$ contraction, ensuring that $Q(K)$ remains measure--compact.  In the context of finite--precision arithmetic we also consider nonlinear quantizers $Q$ which need not be contractions but satisfy an additive error bound $\|Q(f)-f\|_1\le \delta$ for all $f\in K$ and are idempotent ($Q\circ Q=Q$).  This distinction is important: conditional expectations preserve convexity and nonexpansiveness exactly, while practical rounding operators may increase distances but only by a controlled amount.

The results in Proposition~\ref{prop:AFPP} and Theorem~\ref{thm:HITLFP} should therefore be read with these conditions in mind: measure--compactness and nonexpansiveness (together with idempotent coarseners) guarantee true fixed points; the additive error model guarantees \emph{$\delta$--approximate fixed points}, valid for tolerances $\epsilon>\delta$ but in general not for smaller tolerances.  When these hypotheses are weakened or removed, the conclusions may fail, as illustrated by the counterexamples discussed above.  In particular, in nonreflexive subsets of $L^1$ that are not measure--compact, one can construct nonexpansive isometries with no fixed or approximate fixed points under small quantisation.

To help visualise the periodic behaviour described above, Table~\ref{tab:quantcycle} lists the successive iterates in this discrete cycle.  Starting at $x_0=0.0$ and repeatedly applying $\widetilde T$, the sequence cycles through five distinct values before returning to $0.0$.

\begin{table}[ht]
    \centering
    \caption{Orbit of the quantization example with $d=0.6$ and $\delta=0.05$.  The values $x_n$ correspond to successive iterates $x_{n+1}=\widetilde T(x_n)$, rounding to the nearest tenth at each step.  After five steps the orbit returns to the starting point, illustrating the period--5 cycle discussed in the text.}
    \label{tab:quantcycle}
    \begin{tabular}{c|c}
        \hline
        \textbf{Iteration $n$} & \textbf{Value $x_n$} \\ \hline
        0 & 0.0 \\
        1 & 0.6 \\
        2 & 0.2 \\
        3 & 0.8 \\
        4 & 0.4 \\
        5 & 0.0 \\ \hline
    \end{tabular}
\end{table}

\begin{proposition}[Distances under a quantized translation]
Let $T(x)=x+d\pmod{1}$ be the translation on $K=[0,1]$ by a fixed distance $d\in(0,1)$, and let $Q\colon K\to K$ be any rounding operator satisfying an additive error bound $|Q(x)-x|\le \delta$ for all $x\in K$ and $Q\circ Q=Q$.  Define $\widetilde T:=Q\circ T$.  Then for every $x\in K$ the deviation $|\widetilde T(x)-x|$ satisfies
\[
  d - \delta \;\le\; |\widetilde T(x)-x| \;\le\; d + \delta.
\]
Consequently:
\begin{enumerate}
\item If $0<\varepsilon<d-\delta$, then $\widetilde T$ has no $\varepsilon$–fixed point (no point $x$ with $|\widetilde T(x)-x|<\varepsilon$).
\item If $\varepsilon\ge d+\delta$, then every $x\in K$ is an $\varepsilon$–fixed point of $\widetilde T$.
\item If $d-\delta\le \varepsilon<d+\delta$, then there exists at least one $\varepsilon$–fixed point of $\widetilde T$, but not every point is an $\varepsilon$–fixed point.
\end{enumerate}
\end{proposition}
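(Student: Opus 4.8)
The plan is to reduce the two-sided estimate to a single application of the triangle inequality in each direction, using two facts that are already on the table: the translation displaces every point by exactly $d$, so $|T(x)-x|=d$ (the standing computation $\|T(x)-x\|_1=d$ recorded before the proposition), and the quantizer obeys the additive bound $|Q(T(x))-T(x)|\le\delta$. Note that idempotence of $Q$ is irrelevant here; it matters only for the non-accumulation of error in the approximate-fixed-point statements. First I would obtain the lower bound from the reverse triangle inequality,
\[
|\widetilde T(x)-x|\ \ge\ |T(x)-x|-|Q(T(x))-T(x)|\ \ge\ d-\delta,
\]
and the upper bound from the ordinary triangle inequality,
\[
|\widetilde T(x)-x|\ \le\ |T(x)-x|+|Q(T(x))-T(x)|\ \le\ d+\delta.
\]
This yields $d-\delta\le|\widetilde T(x)-x|\le d+\delta$ for every $x\in K$, which is the entire analytic content of the proposition.

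Consequences (1) and (2) then fall out of the uniform bounds with no further work. For (1), if $\varepsilon<d-\delta$ then $|\widetilde T(x)-x|\ge d-\delta>\varepsilon$ for all $x$, so no point satisfies $|\widetilde T(x)-x|<\varepsilon$ and $\widetilde T$ has no $\varepsilon$–fixed point. For (2), if $\varepsilon\ge d+\delta$ then $|\widetilde T(x)-x|\le d+\delta\le\varepsilon$ for all $x$, so every point is an $\varepsilon$–fixed point; at the exact threshold $\varepsilon=d+\delta$ one reads the defining inequality non-strictly (or passes to $\varepsilon>d+\delta$), since the upper bound can be attained.

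The only step needing genuine care is (3), where the uniform band does not by itself decide the intermediate regime: I must produce both a point with displacement strictly below $\varepsilon$ and a point with displacement at least $\varepsilon$. Writing the error $e(y):=Q(y)-y$, the signed computation gives $|\widetilde T(x)-x|=|d+e(T(x))|$, so I would use that for honest nearest–point rounding $e(y)$ runs through the whole interval $[-\delta,\delta]$ as $y$ varies, and that $y=T(x)$ sweeps all of $[0,1]$ because $T$ is a bijection of the circle. Hence the realized displacements actually populate the full band and attain both endpoints $d-\delta$ and $d+\delta$. Since $\varepsilon<d+\delta$, a point with $e(T(x))$ near $+\delta$ has displacement exceeding $\varepsilon$, so not every point is $\varepsilon$–fixed; since $\varepsilon\ge d-\delta$, a point with $e(T(x))$ near $-\delta$ has displacement below $\varepsilon$ (strictly, once $\varepsilon>d-\delta$), giving an $\varepsilon$–fixed point, with the boundary $\varepsilon=d-\delta$ again governed by the strict-versus-nonstrict convention.

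The main obstacle is therefore not the norm estimate — that is the two-line triangle-inequality computation above — but the bookkeeping behind (3): one must certify that the displacements genuinely fill the band $[d-\delta,d+\delta]$ rather than clustering near its centre, which fails for a degenerate quantizer whose error never approaches $\pm\delta$. I would secure this by making explicit the sawtooth profile of the rounding error (or, equivalently, by pointing to the five-cycle computed above, in which both extreme displacements $d-\delta$ and $d+\delta$ are realized), and by fixing once and for all whether $\varepsilon$–fixed points carry a strict or non-strict inequality, so that the three regimes partition the parameter range without overlap or gap.
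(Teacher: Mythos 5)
Your band estimate and the deductions of (1) and (2) coincide exactly with the paper's proof: the same two triangle-inequality lines applied to $|Q(T(x))-x|$, followed by the same uniform comparisons against $\varepsilon$. Where you genuinely depart from the paper is in part (3), and your version is the more careful one. The paper's proof simply asserts that ``there exist points $x$ for which the rounding error $Q(T(x))-T(x)$ has negative sign\dots so that $|\widetilde T(x)-x|=d-\delta$'' and likewise that some points realize $d+\delta$; but this does not follow from the stated hypotheses, which require only $|Q(x)-x|\le\delta$ and $Q\circ Q=Q$. Indeed $Q=\operatorname{Id}$ satisfies both, gives $|\widetilde T(x)-x|=d$ for every $x$, and then (3) fails at both ends: for $d-\delta\le\varepsilon\le d$ there is no $\varepsilon$--fixed point (under the paper's strict definition), while for $d<\varepsilon<d+\delta$ every point is one. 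You flag precisely this degeneracy and close it the right way, by invoking the sawtooth error profile of genuine nearest-point rounding together with surjectivity of the circle translation, so that the displacements $|d+e(T(x))|$ actually sweep the full band $[d-\delta,d+\delta]$; you also isolate the strict-versus-nonstrict boundary convention at $\varepsilon=d-\delta$, which the paper silently elides (its claim that displacement exactly $d-\delta$ yields an $\varepsilon$--fixed point at $\varepsilon=d-\delta$ contradicts its own strict-inequality definition). In short: identical proof for the estimate and for (1)--(2); for (3) your argument supplies the missing hypothesis (a quantizer whose error attains, or approaches, $\pm\delta$) and the missing justification that the paper's own proof takes for granted.
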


\begin{proof}
Fix $x\in K$.  Since $T$ is a translation by $d$, we have $|T(x)-x|=d$ for all $x$.  The quantizer $Q$ satisfies the additive error bound $|Q(u)-u|\le \delta$ for any $u\in K$.  Therefore
\[
  |\widetilde T(x)-x| \;=\; |Q(T(x)) - x| \;\le\; |T(x)-x| + |Q(T(x)) - T(x)| \;\le\; d + \delta,
\]
and similarly,
\[
  |\widetilde T(x)-x| \;\ge\; |T(x)-x| - |Q(T(x)) - T(x)| \;\ge\; d - \delta.
\]
Thus $|\widetilde T(x)-x|\in [d-\delta, d+\delta]$ for all $x\in K$, proving the first claim.  Statement (1) follows because if $\varepsilon<d-\delta$ then $|\widetilde T(x)-x|\ge d-\delta>\varepsilon$ for all $x\in K$.  Statement (2) follows because if $\varepsilon\ge d+\delta$ then for all $x\in K$ one has $|\widetilde T(x)-x|\le d+\delta \le \varepsilon$, so every $x$ is an $\varepsilon$–fixed point.  To see statement (3), note that there exist points $x\in K$ for which the rounding error $Q(T(x)) - T(x)$ has negative sign (the quantizer ``pulls back'' $T(x)$ towards $x$), so that $|\widetilde T(x)-x|=d-\delta$.  In such cases $x$ is an $\varepsilon$–fixed point whenever $\varepsilon\ge d-\delta$.  However, there are also points $x$ for which $|\widetilde T(x)-x|=d+\delta$, so if $\varepsilon<d+\delta$ these points are not $\varepsilon$–fixed.  Hence at least one $\varepsilon$–fixed point exists as soon as $\varepsilon\ge d-\delta$, but not all points are $\varepsilon$–fixed unless $\varepsilon\ge d+\delta$.
\end{proof}

\begin{table}[ht]
\centering
\caption{Summary of key results, assumptions and conclusions.  To improve readability, the columns are given fixed widths so that longer entries wrap naturally within the page margins.}
\small
\begin{tabular}{@{}p{0.22\linewidth}p{0.26\linewidth}p{0.26\linewidth}p{0.26\linewidth}@{}}
\hline
\textbf{Result} & \textbf{Assumptions} & \textbf{Conclusion} & \textbf{Comments} \\ \hline
Theorem~\ref{thm:measureFPP} & $K$ measure--compact, bounded, closed, convex & Normal structure $\Rightarrow$ fixed point & Uses Vitali and Dunford--Pettis \\
Proposition~\ref{prop:AFPP} & $T$ nonexpansive, $\|Q(f)-f\|_1\le\delta$ & $\delta$--AFPP: $\epsilon$--fixed points for $\epsilon>\delta$ & Fixed point $f^\star$ persists up to $\delta$ \\
Lemma~\ref{lem:CE-continuity} & Bounded sequence, local measure convergence & $\mathbb{E}[f_n\mid\mathcal G]\to\mathbb{E}[f\mid\mathcal G]$ locally in measure & Ensures $Q(K)$ is measure--compact \\
Lemma~\ref{lem:pigeonhole} & $Q(K)$ finite & Existence of periodic orbit & Finite range only \\
Quantization example & $d=0.6$, $\delta=0.05$ & Period $5$, no $\epsilon$--fixed point for $\epsilon<0.55$; exists some $\epsilon$--fixed points for $\epsilon\ge 0.55$ but worst--case error requires $\epsilon\ge 0.65$ & Illustrates sharpness of thresholds and variation across points \\ \hline
\end{tabular}
\normalsize
\end{table}

For reference, recall the notion of a \emph{robust fixed point property up to $\delta$} introduced above: a set $K$ has this property if every nonexpansive $T$ and every idempotent perturbation $Q$ with $\|Q(f)-f\|\le \delta$ admit an $x\in K$ with $\|Q(T(x)) - x\| \le \delta$.  
The above discussion shows that $L^1$ (or even certain subsets) do not have this property for any small $\delta>0$ because of counterexamples like translations.  
On the other hand, if $K$ is measure--compact, then as argued, $\widetilde T$ has at least a $\delta$-fixed point (and smaller $\epsilon$-fixed points for any $\epsilon>\delta$).  
If we allow a continuum of increasing precisions $\delta_n \to 0$ and corresponding $\widetilde T_n$, any limit of $\epsilon_n$-fixed points (with $\epsilon_n\to 0$) in a compact setting will converge to a true fixed point of $T$ in the limit $n\to\infty$.  
This provides a rigorous sense in which high-precision implementations approximate the true fixed point guaranteed by the unperturbed theory.

\begin{remark}[Approximate fixed points beyond the quantization threshold]
Proposition~\ref{prop:AFPP} guarantees the existence of \emph{$\epsilon$--fixed points} of $\widetilde T$ only when the tolerance exceeds the quantization error $\delta$.  In other words, for each $\epsilon>\delta$ there is an $x\in K$ with $\|\widetilde T(x)-x\|_1<\epsilon$, but one cannot generally expect to find $\epsilon$--fixed points for smaller tolerances $\epsilon\le\delta$.  Accordingly, when we speak of an ``approximate fixed point property'' for the quantized operator $\widetilde T$ in this paper we implicitly mean a $\delta$--approximate fixed point property, valid for all tolerances strictly larger than the rounding error.  Our counterexamples illustrate that no such property holds for $\epsilon<\delta$.
\end{remark}

%
%
\section{Human--AI Interaction as a Fixed Point Problem}\label{sec:HITL}

We now apply the foregoing fixed point theory to a prototypical human--in--the--loop (HITL) co‑editing pipeline.  Throughout this section $K\subset L^1$ is assumed to be nonempty, bounded, closed, convex and \emph{measure--compact}.  By the measure--compactness results just cited, such sets are uniformly integrable, relatively weakly compact and have normal structure; in particular every nonexpansive self‑map of $K$ has a fixed point (see \cite{Lennard1991}).

\medskip
\noindent\textbf{Empirical motivation.}  Recent empirical studies have begun to quantify when combinations of humans and artificial agents yield performance gains.  A review of more than 100 human--AI collaboration experiments showed that the combination of humans and AI does not always outperform the better of humans or AI alone on decision--making tasks.  However, the same study found that on tasks involving content creation, human--AI synergy was positive and significantly larger than for decision--making tasks (see also \cite{Malone2025}).

Moreover, generative AI systems allow for iterative and interactive co--creation: humans can draft, edit and refine text, images or music while the AI adapts to human feedback in real time.  Such findings underscore the potential of human--AI co--editing pipelines and motivate the following fixed point analysis, which provides a rigorous guarantee of a stable consensus artefact in these iterative workflows.

At a more conceptual level, Haase and Pokutta argue that generative AI has moved beyond passive support to become an active contributor across multiple levels of human--AI collaboration.  Their recent survey notes that generative systems can autonomously produce novel and valuable outcomes and extend human creative potential without replacing it (see \cite{HaasePokutta2024}).  These perspectives strengthen the case for mathematical models that capture stabilized consensus in HITL interactions.

\subsection*{Model}
Fix the following operators on $K$:
\begin{enumerate}
\item \emph{AI proposal} $T:K\to K$, assumed \emph{nonexpansive}: $\|T(f)-T(g)\|_1\le\|f-g\|_1$.
\item \emph{Human edit} $H:K\to K$ of the form $H(f)=\psi\circ f$ where $\psi:\mathbb R\to\mathbb R$ is 1–Lipschitz and idempotent ($\psi\circ\psi=\psi$).  Then $H$ is nonexpansive in $L^1$ since $|\psi(a)-\psi(b)|\le |a-b|$ pointwise.
\item \emph{Coarsening/operational quantizer} $Q:=\mathbb E[\cdot\,|\,\mathcal G]$ for a finite sub‑$\sigma$–algebra $\mathcal G\subset\Sigma$.  Then $Q$ is a linear, idempotent contraction on $L^1$: a standard duality argument shows that $\|Q(f)-Q(g)\|_1\le \|f-g\|_1$.
\end{enumerate}
Set the one–round HITL operator $\Phi:=Q\circ H\circ T:K\to K$.  Note that $\Phi$ is nonexpansive as a composition of nonexpansive maps, and $\Phi(K)\subset Q(K)=:K^{\mathcal G}$.  To see that $K^{\mathcal G}$ remains measure–compact, note that $Q$ is continuous in the topology of local convergence in measure by Lemma~\ref{lem:CE-continuity}; as the continuous image of a compact (measure–compact) set $K$, the set $Q(K)$ is itself compact in this topology.  Hence $K^{\mathcal G}$ is measure–compact.  We will solve the HITL stabilization problem by working on $K^{\mathcal G}$.

Figure~\ref{fig:HITLdiagram} depicts this setup schematically: the left region corresponds to the AI proposal $T$, the right region corresponds to the quantizer $Q$, and the human edit $H$ is drawn outside feeding back into the loop.  The black dots and arrows illustrate the orbit of successive iterates.

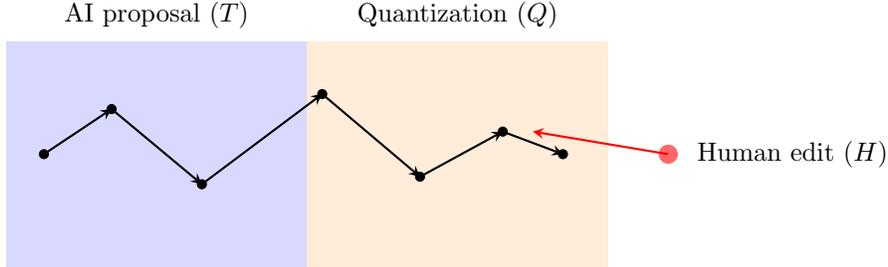
\begin{figure}[ht]
  \centering
  \begin{tikzpicture}[x=1cm,y=1cm,>=stealth]
    \draw[fill=blue!15,draw=none] (0,0) rectangle (4,3);
    \draw[fill=orange!15,draw=none] (4,0) rectangle (8,3);
    \node at (2,3.35) {\small AI proposal ($T$)};
    \node at (6,3.35) {\small Quantization ($Q$)};
    \filldraw[black] (0.5,1.5) circle (0.06);
    \draw[->,thick] (0.5,1.5) -- (1.4,2.1);
    \filldraw[black] (1.4,2.1) circle (0.06);
    \draw[->,thick] (1.4,2.1) -- (2.6,1.1);
    \filldraw[black] (2.6,1.1) circle (0.06);
    \draw[->,thick] (2.6,1.1) -- (4.2,2.3);
    \filldraw[black] (4.2,2.3) circle (0.06);
    \draw[->,thick] (4.2,2.3) -- (5.5,1.2);
    \filldraw[black] (5.5,1.2) circle (0.06);
    \draw[->,thick] (5.5,1.2) -- (6.6,1.8);
    \filldraw[black] (6.6,1.8) circle (0.06);
    \draw[->,thick] (6.6,1.8) -- (7.4,1.5);
    \filldraw[black] (7.4,1.5) circle (0.06);
    \filldraw[red!60] (8.8,1.5) circle (0.12);
    \node[anchor=west] at (9.05,1.5) {\small Human edit ($H$)};
    \draw[->,red,thick] (8.8,1.5) -- (7.0,1.8);
  \end{tikzpicture}
  \caption{A schematic representation of the human--AI co\textendash editing pipeline.  The left blue region corresponds to the AI proposal $T$, the right orange region corresponds to the coarse quantizer $Q$, and the sequence of black dots connected by arrows illustrates successive iterates $f_n\mapsto f_{n+1}$.  The red circle labelled ``Human edit'' denotes the human operator $H$, with the arrow showing how the human intervention feeds back into the process.}
  \label{fig:HITLdiagram}
\end{figure}

A canonical choice is $K=\{f\in L^1(\mu):0\le f\le 1,\ \|f\|_1=\alpha\}$ for some $\alpha\in(0,\mu(\Omega)]$ when $\mu$ is finite; such $K$ is bounded, closed, convex and measure–compact, whence it has the FPP for nonexpansive maps~\cite{Lennard1991}.

\subsection*{Stabilization Theorem}
\begin{theorem}[Stable consensus in HITL]\label{thm:HITLFP}
Let $K\subset L^1$ be nonempty, bounded, closed, convex and measure–compact.  With $T,H,Q$ as above, the composite $\Phi=Q\circ H\circ T$ admits a fixed point in $K^{\mathcal G}:=Q(K)$; i.e., there exists $f^\star\in K^{\mathcal G}$ with $\Phi(f^\star)=f^\star$.  Consequently, the HITL loop $f_{n+1}=\Phi(f_n)$ possesses a stabilized artefact.
\end{theorem}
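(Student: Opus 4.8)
The plan is to reduce the statement to Theorem~\ref{thm:measureFPP}: I would exhibit $\Phi$ as a nonexpansive self-map of the measure--compact, bounded, closed, convex set $K$, invoke the normal-structure fixed point theorem, and then observe that the resulting fixed point automatically lands in the quantized set $K^{\mathcal G}$.

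First I would confirm nonexpansiveness of each factor and hence of the composite. The map $T$ is nonexpansive by hypothesis; the human edit $H(f)=\psi\circ f$ is nonexpansive because $\psi$ is $1$--Lipschitz, giving $\|H(f)-H(g)\|_1=\int|\psi\circ f-\psi\circ g|\,d\mu\le\int|f-g|\,d\mu=\|f-g\|_1$; and $Q=\mathbb E[\cdot\mid\mathcal G]$ is an $L^1$ contraction as recorded in the model. Since a composition of nonexpansive maps is nonexpansive, $\Phi=Q\circ H\circ T:K\to K$ is nonexpansive.

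Next, because $K$ is assumed nonempty, bounded, closed, convex and measure--compact, it satisfies exactly the hypotheses of Theorem~\ref{thm:measureFPP}, which therefore yields a point $f^\star\in K$ with $\Phi(f^\star)=f^\star$. The step that places this fixed point in $K^{\mathcal G}$ is immediate: $f^\star=\Phi(f^\star)=Q\bigl(H(T(f^\star))\bigr)\in Q(K)=K^{\mathcal G}$, since $H(T(f^\star))\in K$. Thus the fixed point lies in the quantized set with no extra work. For the stabilization conclusion I would simply start the loop at $f_0=f^\star$, so that $f_n=f^\star$ for every $n$, exhibiting $f^\star$ as the stable consensus artefact.

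The main point is bookkeeping rather than hard analysis, as all the analytic weight is carried by Theorem~\ref{thm:measureFPP}. If one instead insists on applying the fixed point theorem \emph{directly} on $K^{\mathcal G}$, as the surrounding discussion suggests, then the step requiring genuine care is verifying that $K^{\mathcal G}=Q(K)$ is itself admissible: convexity follows from linearity of $Q$, measure--compactness follows from Lemma~\ref{lem:CE-continuity} together with continuity of $Q$ in the local measure topology, but $L^1$--closedness needs the uniform--integrability upgrade. Concretely, if $g_n\in K^{\mathcal G}$ converges to $g$ in $L^1$ then it converges in measure, measure--compactness supplies a subsequential measure limit in $K^{\mathcal G}$, and uniqueness of limits in measure forces that limit to equal $g$, so $g\in K^{\mathcal G}$. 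The cleaner route above sidesteps this verification entirely by working on $K$ and reading off that the fixed point already lies in the range of $Q$.
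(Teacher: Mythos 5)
Your proof is correct, but it routes through the fixed point theorem differently than the paper does. The paper's proof applies the measure--compact FPP theorem \emph{on the coarsened set} $K^{\mathcal G}=Q(K)$: it must therefore verify that $K^{\mathcal G}$ is nonempty, bounded, closed, convex and measure--compact (the last via Lemma~\ref{lem:CE-continuity}, since $Q(K)$ is the continuous image of a compact set in the local--measure topology), and it uses idempotence of $Q$ to see that $\Phi$ maps $K^{\mathcal G}$ into itself. You instead apply Theorem~\ref{thm:measureFPP} directly on $K$, where all hypotheses hold by assumption, and then observe that the fixed point lands in $K^{\mathcal G}$ for free because $f^\star=\Phi(f^\star)\in\Phi(K)\subseteq Q(K)$. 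Your route is shorter and sidesteps two technical points the paper's proof leans on: it needs neither Lemma~\ref{lem:CE-continuity} nor the $L^1$--closedness of $Q(K)$, which the paper asserts without proof (your appended argument --- $L^1$ convergence implies local convergence in measure, and compactness in the metrizable local--measure topology forces the limit back into $Q(K)$ --- is in fact the cleanest way to fill that assertion). What the paper's approach buys in exchange is a structural fact your argument does not deliver: that $K^{\mathcal G}$ is itself an admissible measure--compact set with the FPP, so that \emph{any} nonexpansive dynamics restricted to the quantized set (not just this particular $\Phi$) stabilizes; this is the viewpoint reused in the surrounding discussion, e.g.\ when Alspach's counterexample is repaired by passing to $K^{\mathcal G}$. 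Both arguments rest on the same analytic engine, so the difference is one of bookkeeping and emphasis rather than substance.
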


\begin{proof}
First note that $K^{\mathcal G}=Q(K)$ is nonempty, bounded, closed and convex.  To see that $K^{\mathcal G}$ is measure--compact, observe that $K$ is measure--compact and bounded and that $Q$ is continuous in the local measure topology on bounded sets (Lemma~\ref{lem:CE-continuity}).  Hence $Q(K)$ is compact in that topology.  Moreover, $Q$ is idempotent, so $Q(x)=x$ for all $x\in K^{\mathcal G}$; thus $\Phi(x)=Q(H(T(x)))$ also belongs to $K^{\mathcal G}$.  Being a composition of nonexpansive maps, $\Phi$ is nonexpansive on $K^{\mathcal G}$.  By the fixed point theorem for measure--compact sets (see \cite{Lennard1991}), every nonexpansive self\,–map of $K^{\mathcal G}$ has a fixed point.  Consequently, there exists $f^\star\in K^{\mathcal G}$ with $\Phi(f^\star)=f^\star$.
\end{proof}

\subsection*{Counterexample and Anti‑thesis}
The measure–compactness hypothesis cannot be dropped.  Alspach constructed a weakly compact, convex $K\subset L^1[0,1]$ and an isometric nonexpansive $T:K\to K$ with no fixed point (see \cite{Alspach1981}).  Taking $H=\operatorname{Id}$ and $Q=\operatorname{Id}$ shows that a HITL loop on such a $K$ need not stabilize, despite nonexpansiveness.  Passing to the measure–compact coarsening $K^{\mathcal G}=Q(K)$ restores the conclusion by Theorem~\ref{thm:HITLFP}.

\subsection*{Robustness to Value–Quantization Error}
Hardware value–quantization (rounding) need not be nonexpansive pointwise.  We model it as a bounded additive perturbation of the nonexpansive coarsener $Q$:
\[
\widehat Q(f) \;=\; Q(f) + R(f), \qquad \|R(f)\|_1\le \delta \quad\text{for all }f\in K,
\]
and define the implemented loop $\widehat\Phi:=\widehat Q\circ H\circ T$.  The next result yields an a priori approximate fixed point.

\begin{proposition}[AFPP under bounded quantization error]\label{prop:AFPP}
Under the hypotheses of Theorem~\ref{thm:HITLFP}, suppose that $Q$ is an idempotent $L^1$--contraction (conditional expectation) and that $\widehat Q=Q+R$ is a perturbation satisfying $\|R(f)\|_1\le \delta$ for all $f\in K$.  Define the implemented loop $\widehat\Phi:=\widehat Q\circ H\circ T$; note that $\widehat Q$ need not itself be idempotent.  Then, for every tolerance $\epsilon>\delta$ there exists $x\in K$ with $\|\widehat\Phi(x)-x\|_1<\epsilon$.  In particular, if $f^\star$ is a fixed point of $\Phi$, then $\|\widehat\Phi(f^\star)-f^\star\|_1\le \delta$.
\end{proposition}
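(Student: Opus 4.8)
The plan is to reduce Proposition~\ref{prop:AFPP} to the fixed point guaranteed by Theorem~\ref{thm:HITLFP} and then absorb the perturbation $R$ into the additive error budget. First I would invoke Theorem~\ref{thm:HITLFP} to obtain a genuine fixed point $f^\star\in K^{\mathcal G}$ of the unperturbed loop $\Phi=Q\circ H\circ T$, so that $\Phi(f^\star)=f^\star$. The key observation is that $\widehat\Phi$ and $\Phi$ differ only through the additive term $R$ applied after $H\circ T$: writing $\widehat Q=Q+R$ we have, for any $x\in K$,
\[
\widehat\Phi(x)=\widehat Q(H(T(x)))=Q(H(T(x)))+R(H(T(x)))=\Phi(x)+R(H(T(x))).
\]
Evaluating at $x=f^\star$ and using $\Phi(f^\star)=f^\star$ gives $\widehat\Phi(f^\star)-f^\star=R(H(T(f^\star)))$, whose $L^1$ norm is at most $\delta$ by the hypothesis $\|R(f)\|_1\le\delta$. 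This immediately proves the final sentence of the statement, that $f^\star$ is a $\delta$--fixed point of $\widehat\Phi$.

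Next I would establish the stronger claim that for every tolerance $\epsilon>\delta$ there is some $x\in K$ with $\|\widehat\Phi(x)-x\|_1<\epsilon$. The clean route is to take $x=f^\star$ again: since $\|\widehat\Phi(f^\star)-f^\star\|_1\le\delta<\epsilon$, the point $f^\star$ already witnesses the $\epsilon$--fixed point property for every $\epsilon>\delta$. This is essentially the same reasoning used in the discussion preceding Proposition~\ref{prop:AFPP}, where a true fixed point of the nonexpansive map absorbs the quantization error; here the roles of $T$ and $Q$ are merely played by the composite $H\circ T$ and the perturbation $R$. No continuity or nonexpansiveness of $\widehat Q$ is needed, which is why the hypothesis explicitly allows $\widehat Q$ to fail idempotence.

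The only subtlety worth flagging is that $f^\star$ must lie in the domain on which $H\circ T$ and $R$ are defined, and that $\widehat\Phi$ indeed maps into (a superset of) $K$; since $f^\star\in K^{\mathcal G}\subset K$ and $R$ is defined on all of $K$ with bounded range, this causes no difficulty, though I would note that $\widehat\Phi$ need not be a self--map of $K^{\mathcal G}$ because $R$ can push $\widehat Q(f)$ outside $Q(K)$. This is harmless for the approximate statement, which only concerns the displacement $\|\widehat\Phi(x)-x\|_1$ rather than invariance of $K^{\mathcal G}$. I do not expect any serious obstacle: the argument is a direct triangle--inequality estimate anchored at the exact fixed point of $\Phi$, and the main conceptual point is simply recognizing that the additive perturbation bound $\delta$ transfers verbatim from $R$ to the displacement of $\widehat\Phi$ at $f^\star$.
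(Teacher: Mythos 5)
Your proposal is correct and follows essentially the same route as the paper's own proof: anchor at the exact fixed point $f^\star$ of $\Phi$ from Theorem~\ref{thm:HITLFP}, note that $\widehat\Phi(f^\star)-f^\star=R(H(T(f^\star)))$ has norm at most $\delta$, and take $x=f^\star$ for every tolerance $\epsilon>\delta$. Your added observation that $\widehat\Phi$ need not map into $K^{\mathcal G}$ is a harmless and accurate refinement, but the core argument is identical.
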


\begin{proof}
Since $K$ is measure–compact and $\Phi=Q\circ H\circ T$ is nonexpansive, Theorem~\ref{thm:HITLFP} provides a fixed point $f^\star\in K^{\mathcal G}$ with $\Phi(f^\star)=f^\star$.  We will verify that this fixed point becomes an approximate fixed point for the perturbed operator $\widehat\Phi=\widehat Q\circ H\circ T$.

  By definition, the perturbed coarsener satisfies $\widehat Q(f)=Q(f)+R(f)$ with $\|R(f)\|_1\le\delta$ for all $f\in K$.  Evaluating $\widehat\Phi$ at $f^\star$ and using the idempotence of $Q$ one obtains
\begin{align*}
\bigl\|\widehat\Phi(f^\star)-f^\star\bigr\|_1
&=\bigl\|\widehat Q\bigl(H\bigl(T(f^\star)\bigr)\bigr)-Q\bigl(H\bigl(T(f^\star)\bigr)\bigr)\bigr\|_1\\
&=\bigl\|\bigl(Q+R\bigr)\bigl(H(T(f^\star))\bigr)-Q\bigl(H(T(f^\star))\bigr)\bigr\|_1\\
&=\bigl\|R\bigl(H(T(f^\star))\bigr)\bigr\|_1\;\le\;\delta.
\end{align*}
The final inequality emphasises that the error is controlled by the perturbation $R$ alone.  Thus $f^\star$ is a $\delta$–fixed point of $\widehat\Phi$.  To obtain the stronger statement, let $\epsilon>\delta$ be given.  Since $\|\widehat\Phi(f^\star)-f^\star\|_1\le\delta<\epsilon$, the point $f^\star$ is in particular an $\epsilon$–fixed point of $\widehat\Phi$.  Taking $x=f^\star$ in the statement of the proposition establishes the existence of an $\epsilon$–fixed point whenever $\epsilon>\delta$, and the second assertion follows from the same computation.
\end{proof}

\subsection*{A concrete instantiation}
Let $\mu$ be finite and
\[
K_\alpha:=\big\{f\in L^1(\mu):0\le f\le 1,\ \|f\|_1=\alpha\big\}\quad (\alpha\in(0,\mu(\Omega)] ).
\]
Then $K_\alpha$ is bounded, closed, convex and measure–compact, hence has the FPP for nonexpansive maps (see \cite{Lennard1991}).  Choose a finite partition $\mathcal P=\{P_1,\dots,P_m\}$ generating $\mathcal G$, set $Q=\mathbb E[\cdot\,|\,\mathcal G]$, fix any nonexpansive $T:K_\alpha\to K_\alpha$ (for example a calibrated probability update), and take $H(f)=\operatorname{clip}_{[\ell,u]}(f)$ with $0\le \ell<u\le 1$.  Then $\Phi=Q\circ H\circ T$ has a fixed point $f^\star\in Q(K_\alpha)$ by Theorem~\ref{thm:HITLFP}, and any bounded value–quantization of $Q$ yields a $\delta$–fixed point by Proposition~\ref{prop:AFPP}.  This provides a mathematically certified stabilized consensus in a standard HITL labelling/refinement loop on $L^1$.
\subsection*{Fairness and harm considerations}
The models studied in this section guarantee the existence (and approximate persistence) of a consensus artefact purely from geometric and analytic assumptions.  They are silent on questions of fairness, harm or bias.  In practice, collaborative human–AI workflows may amplify biases present in the training data or in human edits.  The operator $H$ may enforce ethical or legal constraints, and the coarsener $Q$ may reflect infrastructure that differentially affects groups.  While it is outside the scope of this note, designing operators $T$, $H$ and $Q$ that incorporate fairness objectives and bounding the propagated harm across iterations is an important direction for future work.

\medskip
\noindent\emph{Remark on uniqueness.}
If, in addition, the loop admits a uniform separation of alternatives (in the sense of the convexity–in–measure modulus underlying the normal–structure argument), then one can promote uniqueness of the stabilized artefact via the normal–structure machinery behind Lemma~4.9–Corollary~4.11 in the accompanying manuscript by Alpay and Alakkad\,\cite{AlpayAlakkadNew}.  An in‑depth exploration of these finer uniqueness issues lies outside the scope of the present note.

\subsection*{Illustrative numerical example}
To make the above theory more tangible, let us examine a simple discrete measure space $\Omega=\{1,2\}$ endowed with the counting measure $\mu(\{i\})=1$.  In this case $L^1(\mu)$ can be identified with $\mathbb R^2$ equipped with the $\ell^1$ norm.  For a fixed $\alpha\in(0,2]$ define
\[
K_{\alpha}=\bigl\{(a,b)\in[0,1]^2:\ a+b=\alpha\bigr\}.
\]
This set is a bounded, closed and convex subset of $\mathbb R^2$; because $\Omega$ is finite every bounded subset is compact in the topology of local convergence in measure.  Thus $K_{\alpha}$ satisfies the assumptions of our fixed point theorems.

Consider the following operators on $K_{\alpha}$.  Define a nonexpansive \emph{averaging} map
\[
T(a,b) := \Bigl(\tfrac{a+b}{2},\,\tfrac{a+b}{2}\Bigr),
\]
which maps each pair to its barycentre.  This map is easily seen to be nonexpansive in the $\ell^1$ norm.  For the human edit, fix $0\le \ell<u\le 1$ and set
\[
H(a,b) := (\min\{\max\{a,\ell\},u\},\, \min\{\max\{b,\ell\},u\}),
\]
which clips each component into the interval $[\ell,u]$.  This map is $1$–Lipschitz and idempotent.  Finally, let $q\colon[0,1]\to[0,1]$ denote rounding to the nearest multiple of $0.1$ and define the quantizer
\[
Q(a,b) := \bigl(q(a),q(b)\bigr).
\]
Rounding to a discrete grid is not in general nonexpansive—distances can increase by up to the rounding step—but the rounding error in each coordinate is bounded by $0.05$.  In particular, for any $(a,b)\in [0,1]^2$ one has $\|Q(a,b)-(a,b)\|_1\le 0.1$, and $Q$ satisfies $Q\circ Q = Q$.  Hence $Q$ fits the additive error model used in Proposition~\ref{prop:AFPP}.  The composite
\[
\Phi = Q\circ H\circ T
\]
therefore satisfies the hypotheses of Theorem~\ref{thm:HITLFP} (with $T$ nonexpansive and $Q$ idempotent).  A direct computation shows that $\bigl(q(\alpha/2), q(\alpha/2)\bigr)$ is a fixed point of $\Phi$; indeed, for any $(a,b)\in K_\alpha$ one has $T(a,b) = (\alpha/2, \alpha/2)$, $H$ leaves this point unchanged provided $\ell\le \alpha/2\le u$, and $Q$ rounds $\alpha/2$ to the nearest multiple of $0.1$.  For example, with $\alpha=1$ and $\ell=0$, $u=1$, starting from $(0.8,0.2)$ we have
\[
(0.8,0.2) \xrightarrow{T} (0.5,0.5) \xrightarrow{H} (0.5,0.5) \xrightarrow{Q} (0.5,0.5),
\]
so the iteration stabilises in a single step at the quantised consensus.  In general, any initial $(a,b)\in K_{\alpha}$ is mapped by $T$ to $(\alpha/2,\alpha/2)$ and then rounded, yielding a stable consensus artefact $(q(\alpha/2), q(\alpha/2))$.  This toy example illustrates on a finite measure space how our abstract results guarantee a stable consensus artefact and approximate fixed points in practice.


\section*{Conclusion}
We integrated the classical measure--compactness framework for fixed points in $L^1$ with an analysis of finite‑precision effects.  The measure‑compactness condition yields not only existence of genuine fixed points for nonexpansive maps, but also ensures that small perturbations (quantization) cannot eliminate fixed point phenomena entirely: one still finds approximate fixed points.  The pathological cases where $L^1$ fails the FPP correspondingly exhibit a failure of the AFPP under sufficiently fine quantization, reinforcing the necessity of the geometric assumptions (uniform integrability and normal structure) even when considering approximate orbits.  

Beyond these foundational results, we showed how human--AI interaction can be cast as a fixed point problem on $L^1$.  By modelling an AI proposal, a human edit and a coarse quantizer as successive nonexpansive operators on a measure‑compact set, we established the existence of a \emph{stable consensus artefact} for the resulting co‑editing loop and proved that this artefact survives as an approximate fixed point under bounded value–quantization error.  A concrete instantiation illustrates how these abstract principles yield mathematically certified stabilization in human‑in‑the‑loop labelling and refinement workflows.  Taken together, our work demonstrates that the geometric perspective of measure‑compactness and normal structure provides robust, interpretable guarantees for both theoretical fixed point problems and practical human–AI systems.
\paragraph{Limitations and future directions.}  The results presented here are existence statements: while our theorems guarantee that a stabilised artefact exists (and persists approximately under bounded quantization), they do not by themselves ensure uniqueness or address ethical questions such as fairness or potential harms in human–AI collaboration.  Uniqueness can sometimes be obtained under additional separation assumptions, as indicated in our remark, but further work is needed to characterise when a unique consensus must arise.  Our analysis also presupposes the measure–compactness of the underlying set $K$ and the nonexpansiveness of the component operators; when these assumptions fail, stabilisation may break down.  From a practical standpoint, it would be valuable to test the theory on larger empirical datasets and to explore how fairness constraints, bias mitigation and adversarial behaviour can be incorporated into the model.  We hope the illustrative example offered here encourages further numerical and empirical investigations alongside the geometric framework.

\section*{Appendix: Code for Numerical Examples}

While the results in this paper are purely theoretical, readers may find it helpful to experiment with the finite--precision examples.  The following Python code reproduces the quantisation counterexample from Section~2 and the illustrative human--AI loop from Section~\ref{sec:HITL}.  It can be run in any standard Python interpreter.

\smallskip
\noindent\textbf{Quantisation counterexample.}  This script implements the translation \(T(x)=x+d\,\mathrm{mod}\,1\) with \(d=0.6\) on the interval \([0,1]\), rounds the result to the nearest tenth (using the function \texttt{round(x*10)/10.0}) and iterates five times to reveal the period--5 cycle.

\begin{verbatim}
# Quantisation counterexample: translation plus rounding.
d = 0.6
delta = 0.05  # bound on rounding error

# Define the rounding operator to nearest tenth.
def Q(x):
    return round(x * 10) / 10.0

# Define the underlying translation operator.
def T(x):
    return (x + d) % 1.0

# Initialise the orbit at x0=0.0 and iterate five steps.
values = [0.0]
for n in range(5):
    x = values[-1]
    new_val = Q(T(x))
    values.append(new_val)
print("Orbit:", values)
\end{verbatim}

\noindent\textbf{Human--AI loop.}  This script simulates the two--point measure example with \(\alpha=1\).  The AI proposal averages the two components, the human edit clips values to the unit interval and the quantiser rounds to the nearest tenth.  Starting from \((0.8,0.2)\) the loop converges in one step to the consensus \((0.5,0.5)\), as illustrated in Section~\ref{sec:HITL}.

\begin{verbatim}
# Human--AI loop on a two-point measure space.
alpha = 1.0

# AI proposal: average the two components.
def T_pair(a, b):
    return ((a + b) / 2.0, (a + b) / 2.0)

# Human edit: clip each coordinate to [0,1].
def clip(u, lower=0.0, upper=1.0):
    return min(max(u, lower), upper)

def H_pair(a, b):
    return (clip(a), clip(b))

# Quantiser: round each coordinate to nearest tenth.
def Q_pair(a, b):
    return (round(a * 10) / 10.0, round(b * 10) / 10.0)

# Initialise (a,b) and iterate the composite Phi three times.
x = (0.8, 0.2)
for n in range(3):
    a, b = x
    x = Q_pair(*H_pair(*T_pair(a, b)))
    print(f"Iteration {n+1}:", x)
\end{verbatim}

\smallskip
These short scripts provide a transparent demonstration of the behaviours described in the text.  Researchers are encouraged to modify the parameters \(d\), \(\delta\), \(\alpha\) and the rounding granularity to explore how the dynamics change under different quantisation schemes.

%
\bigskip
\noindent\textbf{Higher--dimensional human--AI loop.}\
To illustrate how the human--AI consensus mechanism generalises, consider a four--point measure space (representing, for instance, four categories in a labelling task).  We identify $L^1$ with $\mathbb R^4$ equipped with the $\ell^1$ norm and normalise an arbitrary starting vector to have total mass $\alpha=2$.  The AI proposal averages the four components, the human edit clips values to $[0,1]$, and the quantiser rounds to the nearest tenth in each coordinate.  The following code selects a random initial vector, normalises it, and applies the composite map $\Phi=Q\circ H\circ T$ for several iterations.  One observes that the iterates converge quickly to a consensus vector in the quantised grid.

\begin{verbatim}
# Higher-dimensional human--AI loop on a four-point measure space.
import random

alpha = 2.0  # total mass

# AI proposal: average all components of a vector.
def T_vector(v):
    avg = sum(v) / len(v)
    return [avg for _ in v]

# Human edit: clip each coordinate to [0,1].
def H_vector(v, lower=0.0, upper=1.0):
    return [min(max(x, lower), upper) for x in v]

# Quantiser: round each coordinate to nearest tenth.
def Q_vector(v):
    return [round(x * 10) / 10.0 for x in v]

# Construct a random initial vector and normalise it to have sum alpha.
v = [random.random() for _ in range(4)]
total = sum(v)
v = [x * alpha / total for x in v]
print("Initial vector (normalised):", v)

# Iterate the composite operator Phi several times.
for n in range(5):
    v = Q_vector(H_vector(T_vector(v)))
    print(f"Iteration {n+1}:", v)
\end{verbatim}

\smallskip
This more complex example shows that the same averaging--clipping--quantising mechanism yields a consensus even in higher dimensions, consistent with the theoretical results of Section~\ref{sec:HITL}.  By adjusting the rounding grid or the number of categories, one can experiment with the effects of coarser or finer quantisation in multidimensional settings.

\subsubsection*{Conditional expectation example}
The preceding examples employ finite--range quantisers that round to a discrete grid.  To illustrate how a true conditional expectation fits into our framework, consider a finite four--point measure space $\Omega=\{1,2,3,4\}$ with the counting measure.  Identify $L^1(\Omega)$ with $\mathbb R^4$ equipped with the $\ell^1$ norm, and fix $\alpha\in(0,4]$.  Define
\[
  K_{\alpha}^{(4)}\;:=\;\Bigl\{ (a_1,a_2,a_3,a_4)\in[0,1]^4 : a_1+a_2+a_3+a_4=\alpha\Bigr\},
\]
which is bounded, closed, convex and measure--compact.  Partition $\Omega$ into two blocks $\mathcal P=\{\{1,2\},\{3,4\}\}$ and let $\mathcal G$ denote the corresponding finite $\sigma$--algebra.  The conditional expectation $Q=\mathbb E[\cdot\mid\mathcal G]$ acts on a vector $(a_1,a_2,a_3,a_4)$ by averaging coordinates within each block:
\[
  Q(a_1,a_2,a_3,a_4)\;=\;\Bigl( \tfrac{a_1+a_2}{2},\tfrac{a_1+a_2}{2},\tfrac{a_3+a_4}{2},\tfrac{a_3+a_4}{2}\Bigr).
\]
Define the AI proposal $T:K_{\alpha}^{(4)}\to K_{\alpha}^{(4)}$ by averaging all four coordinates,
\[
  T(a_1,a_2,a_3,a_4)\;=\;\Bigl( \tfrac{a_1+a_2+a_3+a_4}{4},\dots,\tfrac{a_1+a_2+a_3+a_4}{4}\Bigr),
\]
and let the human edit $H$ be simple clipping to the interval $[0,1]$ in each coordinate (which is nonexpansive).  The composite map $\Phi=Q\circ H\circ T$ is then a nonexpansive map on the measure--compact set $K_{\alpha}^{(4)}$.  By Theorem~\ref{thm:HITLFP} it has a fixed point $f^\star\in Q(K_{\alpha}^{(4)})$, and one can compute it explicitly: $f^\star$ assigns the average value $\alpha/4$ to all four points and then applies $Q$, yielding $f^\star=(\alpha/4,\alpha/4,\alpha/4,\alpha/4)$.  Iterating $\Phi$ from any starting point in $K_{\alpha}^{(4)}$ converges rapidly to this consensus vector.  This example demonstrates that our results apply not only to discretised quantisers but also to genuine conditional expectations with infinite range.

\end{document}